\pgfplotsset{compat=1.15}
\newcommand{\mnf}{\mathfrak{m}}
\renewcommand{\epsilon}{\varepsilon}
\newtheorem{theorem}{Theorem}
\newtheorem{lemma}[theorem]{Lemma}
\newtheorem{proposition}[theorem]{Proposition}
\newtheorem{conjecture}[theorem]{Conjecture}
\theoremstyle{definition}
\newtheorem{definition}[theorem]{Definition}
\newtheorem{question}{Question}
\title{Chromatic numbers of flag 3-spheres}
\author{Andrew Newman\thanks{Carnegie Mellon University}}
\begin{document}
\maketitle
\begin{abstract}
A recent conjecture of Chudnovsky and Nevo asserts that flag triangulations of spheres always have linear-sized independent sets, with a precisely conjectured proportion depending on the dimension. For dimensions one and two, the lower bound of their conjecture basically follow from constant bounds on the chromatic number of flag triangulations of $S^1$ and $S^2$. This raises a natural question that does not appear to have been considered: For each $d$ is there a constant upper bound for the chromatic number of flag triangulations of $S^d$? Here we show that the answer to this question is no, and use results from Ramsey theory to construct flag triangulations of 3-spheres on $n$ vertices with chromatic number at least $\widetilde{\Omega}(n^{1/4})$.
\end{abstract}

\section{Introduction}
A flag complex (or alternatively a clique complex) is a simplicial complex whose minimal non-faces all have size 2. Equivalently a complex is flag if it is the maximal simplical complex on its graph. Questions about flag complexes can thus be regarded as questions about higher dimensional properties of graphs since a flag complex is determined by its graph. Of particular interest are flag manifolds, flag complexes that triangulate (closed) manifolds. 

Motivated by the study of $f$-vectors of flag spheres, Chudnovsky and Nevo recently made the following conjecture:
\begin{conjecture}[Chudnovsky--Nevo \cite{ChudnovskyNevo}]
Let $\alpha(n, d)$ denote the minimum size of the largest independent set among all flag triangulations of $d$-spheres on $n$ vertices, then
\[\alpha(n, d) = \left \lceil \frac{n + d - 2}{2d} \right \rceil.\]
\end{conjecture}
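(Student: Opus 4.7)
The statement has two independent halves: an \emph{extremality} half establishing $\alpha(n,d) \leq \lceil(n+d-2)/(2d)\rceil$ by exhibiting flag $d$-spheres on $n$ vertices whose largest independent set is that small, and a \emph{universality} half showing that every flag $d$-sphere on $n$ vertices contains an independent set of at least that size. I would attack them in that order, since the first is concrete combinatorics while the second is the substantive conjectural content.

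For extremality, the natural building block is the join: $\Sigma * \Sigma'$ is a flag triangulation of $S^{\dim\Sigma+\dim\Sigma'+1}$ whenever both factors are, and any independent set in a join lies entirely in one factor, so $\alpha(\Sigma*\Sigma') = \max(\alpha(\Sigma),\alpha(\Sigma'))$. Iterated joins of short cycles $C_{k_1}*\cdots*C_{k_r}$ with each $k_i \in \{4,5\}$ thus yield flag $(2r{-}1)$-spheres with independence number $2$, and more generally, allocating the vertex budget as evenly as possible across $d$ cycles (suspending once when $d$ is even) should realize $\lceil(n+d-2)/(2d)\rceil$ exactly for every admissible $(n,d)$. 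The verification amounts to a routine but case-heavy analysis on the residue of $n$ modulo $2d$, with occasional auxiliary constructions (e.g.\ replacing one join factor by a longer odd cycle) to hit leftover residues.

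For universality I would induct on $d$. The base $d=1$ is the elementary fact that the $n$-cycle has independence $\lfloor n/2\rfloor$. For the inductive step, recall that in a flag $d$-sphere $\Delta$ each vertex link $\lk(v)$ is a flag $(d{-}1)$-sphere, so the inductive hypothesis produces large independent sets inside links; the plan is an iterative pick-and-descend procedure, choosing $v$, seeding the independent set from $\lk(v)$, and repeating in the antistar while tracking a potential function that balances vertex count against dimension drop. A more promising variant is to establish the \emph{fractional} inequality $\chi_f(\Delta^{(1)}) \leq 2d$ and extract the integer independent set from a fractional coloring: since the ordinary chromatic number is \emph{not} bounded in $d=3$, as the main result of this paper shows, any valid argument must pass through a fractional, weighted, or otherwise non-chromatic invariant.

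The main obstacle is precisely this universality direction for $d \geq 3$. The construction in this paper of flag $3$-spheres with chromatic number $\widetilde{\Omega}(n^{1/4})$ rules out the naive approach of producing the independent set as a single color class of a coloring with a bounded palette, which is exactly how the $d=1,2$ cases are handled. One is therefore forced to exploit finer combinatorial or topological structure of flag spheres --- nonnegativity of the $\gamma$-vector (Gal's conjecture), vertex-decomposability, or the recursive link condition --- to extract a linear-sized independent set that no cheap coloring can detect. This is why the conjecture is currently open for $d \geq 3$, and why a complete proof almost certainly requires a genuinely new structural tool beyond the scope of this paper.
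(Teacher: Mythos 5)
The statement you are addressing is a \emph{conjecture} cited from Chudnovsky and Nevo, not a result the paper proves; the paper itself records that it is open for all $d \geq 3$ (and, indeed, the paper's own contribution is evidence that the most naive route to the lower bound must fail). You correctly observe this at the end, which is the essential point. However, two substantive corrections are needed to your discussion, one on each half.

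On the ``extremality'' half, your proposed construction does not achieve the conjectured bound. A join $C_{k_1} * \cdots * C_{k_r}$ triangulates $S^{2r-1}$ on $n = \sum k_i$ vertices, and since an independent set in a join must lie in a single factor, its independence number is $\max_i \lfloor k_i/2 \rfloor$. Balancing the cycle lengths gives roughly $n / (2r) = n/(d+1)$ for $d = 2r-1$, and for $d = 3$ this is $\approx n/4$. The conjecture asserts the extremal value is $\approx n/(2d) = n/6$, strictly smaller. So the ``routine but case-heavy analysis on the residue of $n$ modulo $2d$'' will not close the gap: joins of cycles are simply not tight, a point the paper's concluding remarks make explicitly (join-of-cycles flag $3$-spheres have chromatic number $\leq 6$, hence independence $\geq n/6$, but that is only the trivial direction). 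The matching upper-bound constructions are known only for $d = 2, 3$ and are due to Chudnovsky--Nevo by other means; for $d \geq 4$ the best known upper bound is the weaker $(1 + o_d(1)) \frac{2n}{3d}$ of Theorem~\ref{CNMainTheorem}, so this half of the conjecture is \emph{also} open in general. (You also write ``$d$ cycles'' where you presumably mean $(d+1)/2$ cycles for odd $d$; $d$ joined circles would yield $S^{2d-1}$.)

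On the ``universality'' half, your diagnosis is on target: the paper's main theorem shows $\chi(n,3)$ is unbounded, which kills any argument that produces the independent set as a color class of an $O_d(1)$-coloring, and so the $d \leq 2$ strategy does not generalize. Your suggested alternatives (fractional chromatic number bounded by $2d$, induction on links with a carefully chosen potential, or appeals to structural results such as $\gamma$-nonnegativity) are reasonable things to try, but none of them is carried out here, and you should not present them as more than speculation. In short, your writeup should make clearer that \emph{both} inequalities in the conjecture are open for $d \geq 4$, and that your extremality construction provably falls short of the conjectured value even for $d = 3$.
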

As they point out this is trivial for flag triangulations of $1$-sphere, i.e. for cycle graphs. The $n$-cycle always has maximal independent set of size $\left \lceil \dfrac{n - 1}{2}\right \rceil$. Moreover for $d = 2$, the lower bound follows from the four color theorem and they prove a matching upper bound in their paper. They prove the upper bound for $d = 3$, but still in that case it remains to show a matching lower bound. The main result of their paper regarding this conjecture for general dimensions $d$ is 
\begin{theorem}[Chudnovsky--Nevo \cite{ChudnovskyNevo}]\label{CNMainTheorem}
For $d \geq 3$ and $n \geq 2(d + 1)$,
\[\frac{1}{4} n^{1/(d - 1)}\leq \alpha(n, d) \leq (1 + o_d(1)) \frac{2n}{3d}.\]
\end{theorem}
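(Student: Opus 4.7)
The lower bound $\alpha(n,d)\ge \tfrac{1}{4}n^{1/(d-1)}$ and the upper bound $\alpha(n,d)\le (1+o_d(1))\tfrac{2n}{3d}$ are independent statements, so I would attack them separately. The shape of the lower bound suggests induction on $d$ with the four color theorem as the base case, since the constant $\tfrac14$ on the right is exactly what the four color theorem hands out; the upper bound, which asks for an explicit family of flag $d$-spheres with small independence number, is clearly the harder half.

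For the lower bound I would induct on $d$. The base case $d=2$ is immediate: a flag triangulation of $S^2$ has a planar $1$-skeleton, so the four color theorem gives $\chi\le 4$, hence $\alpha\ge n/4 = \tfrac14 n^{1/(d-1)}$. For the inductive step, given a flag $d$-sphere $X$ on $n$ vertices, the key observation is that the link $\lk_X(v)$ of any vertex $v$ is a flag $(d-1)$-sphere on $\deg(v)$ vertices, and any independent set of $\lk_X(v)$ is automatically an independent set of $X$. Applying the inductive hypothesis to $\lk_X(v)$ gives $\alpha(X)\ge \tfrac14 \deg(v)^{1/(d-2)}$, equivalently $\deg(v)\le (4\alpha(X))^{d-2}$. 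Hence the $1$-skeleton of $X$ has maximum degree at most $(4\alpha(X))^{d-2}$, and the greedy bound $\alpha\ge n/(\Delta+1)$ yields $\alpha(X)\cdot((4\alpha(X))^{d-2}+1)\ge n$. A short rearrangement (using $\alpha\le n/2$, which holds outside trivial cases) then gives $\alpha(X)\ge \tfrac14 n^{1/(d-1)}$. The constant $\tfrac14$ from the four color theorem is expected to propagate exactly, with no loss, through the induction.

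For the upper bound I would try to produce, for each $d$, an explicit family of flag $d$-spheres with $\alpha/n\to 2/(3d)$. The first thing to try is the join construction: the join of flag spheres is flag, the dimensions add (plus one), and the independence numbers add; natural building blocks include $C_5$ (ratio $2/5$) and small flag triangulations of $S^2$ that are not $3$-colorable (ratio just below $1/3$). A naive iterated join, however, produces only ratios like $2/5$ or $1/3$, not $2/(3d)$, so one would likely need either iterated suspension of a carefully chosen high-dimensional base sphere, or a flag-replacement construction (e.g.\ of barycentric-subdivision type) applied to a non-flag sphere whose $f$-vector is engineered so that the largest dimensional stratum has the right relative size. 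Pinpointing a construction that attains the ratio $2/(3d)$ rather than some larger constant is the main obstacle; the lower bound, by contrast, is almost forced by the link property of flag manifolds together with the four color theorem.
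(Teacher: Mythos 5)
Your lower bound argument is sound and is essentially the standard one; note that this theorem is cited from Chudnovsky--Nevo rather than proved in this paper, but the same inductive structure reappears in the paper's Proposition~\ref{UpperBoundProp} for the chromatic number. The base case $d=2$ via the four color theorem, the use of vertex links as flag $(d-1)$-spheres, and the degree-controlled greedy bound all check out. To close the rearrangement: if $\alpha < \tfrac14 n^{1/(d-1)}$, then $(4\alpha)^{d-2} < n^{(d-2)/(d-1)}$ and $\alpha < \tfrac14 n^{1/(d-1)}$, so $\alpha\bigl((4\alpha)^{d-2}+1\bigr) < \tfrac14\bigl(n + n^{1/(d-1)}\bigr) < n$, contradicting the greedy inequality $\alpha(\Delta+1)\geq n$. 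The constant $\tfrac14$ does indeed propagate cleanly.

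The upper bound half has both an error and an acknowledged gap. The error: for a join, $\alpha(X*Y) = \max(\alpha(X),\alpha(Y))$, not $\alpha(X)+\alpha(Y)$, because in $X*Y$ every vertex of $X$ is adjacent to every vertex of $Y$, so an independent set must live entirely on one side. This is actually favorable for the construction (it keeps $\alpha$ small while $n$ grows), but it changes the arithmetic: the iterated join of $k$ copies of $C_m$ is a flag $(2k-1)$-sphere on $n=km$ vertices with $\alpha=\lfloor m/2\rfloor$, giving ratio about $1/(2k)=1/(d+1)$, which for $d>2$ exceeds the target $2/(3d)$. So the naive join of cycles provably falls short, and since you explicitly stop short of identifying a construction achieving the ratio $2/(3d)$, the upper bound remains unproved in your proposal. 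The Chudnovsky--Nevo construction is a genuinely harder piece that your outline does not recover.
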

One key distinction between the $d \leq 2$ case with the $d \geq 3$ case is that a linear lower bound on $\alpha(n, d)$ for $d \leq 2$ follows simply by bounding the chromatic number of flag triangulations of circles and 2-spheres. However, Chudnovsky and Nevo do not comment on what bounds on the chromatic number of flag spheres might look like in general. Moreover, while the four color theorem is a well known result that implies that flag 2-spheres always have chromatic number at most 4, the question of the chromatic number of flag 3-spheres, does not appear to have been considered before. In fact, it does not seem to be the case that the possibility of a constant upper bound on the chromatic number of flag 3-spheres had even been ruled out. Here we let $\chi(n, d)$ denote the largest chromatic number among all flag $d$-spheres on $n$ vertices. We show that $\chi(n, 3)$ is unbounded (and therefore $\chi(n, d))$ is unbounded for larger $d$ as well) and give upper and lower bounds for its asymptotics.
\begin{theorem}\label{maintheorem}
As $n$ tends to infinity 
\[\widetilde{\Omega}(n^{1/4}) \leq \chi(n, 3) \leq 4 n^{1/2}.\]
\end{theorem}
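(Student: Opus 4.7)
The plan is to prove the two bounds by quite different methods: the upper bound follows from applying the four color theorem to vertex links, while the lower bound, which is the main contribution, combines a Ramsey-theoretic graph with a topological construction.

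For the upper bound, the key observation is that in any flag $3$-sphere $K$, the link of every vertex $v$ is a flag $2$-sphere, hence a planar triangulation, and so $4$-colorable by the four color theorem. Consequently, every neighborhood $N(v)$ in the $1$-skeleton $G$ of $K$ is $4$-colorable, giving $\alpha(N(v)) \geq d(v)/4$. Crucially, this property is inherited by induced subgraphs. I would then prove by induction on $n$ that any graph on at most $n$ vertices with this property satisfies $\chi \leq 4\sqrt{n}$: if $\Delta(G) < 4\sqrt{n}$ then greedy colouring suffices, while otherwise a vertex of degree at least $4\sqrt{n}$ has a $4$-colorable neighborhood containing an independent set of size at least $\sqrt{n}$, which receives one new colour before one recurses on the at most $n - \sqrt{n}$ remaining vertices. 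The inequality $\sqrt{n - \sqrt{n}} \leq \sqrt{n} - \tfrac{1}{4}$ closes the induction.

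For the lower bound, the plan is to use Kim's asymptotic $R(3,t) = \Theta(t^2/\log t)$ to obtain, for every $N$, a triangle-free graph $H$ on $N$ vertices with $\alpha(H) \leq O(\sqrt{N\log N})$, and therefore $\chi(H) \geq \widetilde{\Omega}(\sqrt{N})$. The goal is then to realise $H$ as a subgraph of the $1$-skeleton of a flag triangulation $K$ of $S^{3}$ on only $n = O(N^{2})$ vertices; the matching of exponents $\sqrt{N} = n^{1/4}$ is exactly what this quadratic blow-up is engineered to deliver. Once such a $K$ exists, any proper colouring of $K$ restricts to a proper colouring of $H$, so $\chi(K) \geq \chi(H) \geq \widetilde{\Omega}(n^{1/4})$. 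The main obstacle, and where I expect the bulk of the technical work to lie, is the realisation step itself: given a prescribed $H$, one must build a flag $3$-sphere $K$ whose $1$-skeleton contains the edges of $H$, with only a quadratic blow-up in vertex count, while simultaneously ensuring that each vertex link in $K$ is a flag $2$-sphere and that the global complex is PL homeomorphic to $S^{3}$. A natural approach is to assemble $K$ from small flag $3$-ball gadgets indexed by the vertices and edges of $H$, glued along matching boundary $2$-spheres in a pattern dictated by $H$; designing the gadgets so that every link remains a flag $2$-sphere and the overall topology is $S^{3}$ is the crux.
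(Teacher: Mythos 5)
Your upper bound argument is sound and is essentially the paper's argument (Proposition \ref{UpperBoundProp}): both exploit that vertex links in a flag $3$-sphere are flag $2$-spheres and hence $4$-colorable by the four color theorem, both extract large independent sets from high-degree neighborhoods, and both run the resulting iteration until Brooks'/greedy takes over, landing on the constant $4$. (Quote Brooks' theorem rather than greedy in the low-degree case to avoid the $+1$.) The bookkeeping is slightly different — you peel off one color class of size $\geq \sqrt{n}$ per step while the paper peels off a whole four-colored link — but the idea and the bound are the same.

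The lower bound, however, has a genuine gap. You correctly identify the high-level strategy — take a Kim-type triangle-free graph $H$ on $N$ vertices with $\chi(H) = \widetilde{\Omega}(\sqrt{N})$, realize $H$ as a subgraph of a flag $3$-sphere on $O(N^2)$ vertices, and read off $\chi \geq \widetilde{\Omega}(n^{1/4})$ — and you correctly flag the realization step as the crux. But you do not prove it; your sketch of assembling flag $3$-ball gadgets indexed by the vertices and edges of $H$ and gluing them along boundary $2$-spheres is only a heuristic, and it faces obstacles you do not address: gluing two flag complexes along a common subcomplex need not produce a flag complex (new empty simplices can appear across the seam), and gluing $3$-balls along $2$-spheres in a pattern indexed by an arbitrary graph does not in general produce a manifold, let alone $S^3$. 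The paper's Lemma \ref{SubdivisionLemma} proves exactly this realization step by a completely different mechanism: place $H$ inside the $1$-skeleton of the cyclic $4$-polytope on $N$ vertices (which is already a triangulation of $S^3$ with complete $1$-skeleton), and then eliminate every empty triangle via at most $4\binom{N}{2}$ edge subdivisions, always subdividing non-edges of $H$ (legal because $H$ is triangle-free, so every empty triangle has such an edge) and using the invariant that all remaining empty triangles lie on original vertices to control the cascade of new empty triangles created by each subdivision. That concrete subdivision argument is what delivers the quadratic blow-up and hence the exponent $1/4$; without it or a fully worked-out alternative, the lower bound is not established.
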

Here and throughout $\widetilde{\Omega}$ and $\widetilde{O}$ are the usual Bachmann--Landau notation, but ignoring polylogarithmic factors. 
\section{Background}
Generalizations of the four color theorem to higher dimensional settings have been considered in the literature. Lutz and M\o{}ller \cite{LutzMoller} studied the weak chromatic numbers of skeleta of triangulated manifolds regarded as hypergraphs. More specifically for a pure $k$-dimensional complex, the weak chromatic number is the minimum number of colors required to color the associated $(k + 1)$-uniform hypergraph so that no hyperedge is monochromatic. For a simplicial complex $X$ then there is  a sequence $\chi_1(X) \geq \chi_2(X) \geq \cdots \geq \chi_d(X)$ where $\chi_k(X)$ is the weak chromatic number of the pure $k$-part of $X$. Lutz and M\o{}ller study whether or not certain classes of manifolds admit triangulations $X$ with arbitrarily large $\chi_k(X)$ for different manifold dimensions and values of $k$. 

Around the same time as the work of \cite{LutzMoller}, Heise, Panagiotou, Pikhurko, and Taraz \cite{HPPT} studied these weak chromatic numbers for $k$-complexes that embed in $d$-dimensional space. Further results in this direction were also recently proved by Lee and Nevo \cite{LeeNevo}. 

The question specifically of chromatic numbers restricted to \emph{flag} triangulations does not seem to have been considered. Moreover, the best quantitative bounds for $\chi_k(X)$ within the context of the work of \cite{LutzMoller, HPPT, LeeNevo} are far apart; see Table 1 of \cite{HPPT} which shows sub-logarithmic lower bounds and polynomial upper bounds. 

Questions about largest possible chromatic numbers for simplicial complexes with certain properties also fit into Ramsey Theory. For comparison, a flag 3-sphere is a particular type of $K_5$-free graph, and we recall the best known bounds on the chromatic number of a $K_5$-free graph on $n$ vertices. The best-known lower bound on the off-diagonal Ramsey number $R(5, t)$ is 
\[\Omega\left( \frac{t^3}{(\log t)^{8/3}} \right)\]
due to Bohman and Keevash \cite{BohmanKeevash}. From this we have that for $n$ large enough there is are $K_5$-free graphs with chromatic number $\widetilde{\Omega}(n^{2/3})$. The best-known upper bound on $R(5, t)$ is 
\[O\left( \frac{t^4}{(\log t)^{3}}\right)\]
due to \cite{AjtaiKomlos}. By repeatedly coloring large independent sets with one color we have that a $K_5$-free graph on $n$ vertices has chromatic number $O(n^{3/4})$. This upper bound on the chromatic number for a $K_5$-free graph can also be proved inductively in a way similar to the proof of Proposition \ref{UpperBoundProp} below.

\section{The upper bound}
The proof of the upper bound in Theorem \ref{maintheorem} follows basically the same proof as the proof of the lower bound in Theorem \ref{CNMainTheorem} in \cite{ChudnovskyNevo}. It is a slight improvement though because it gives an upper bound for the chromatic number rather than a lower bound for the largest independent set. Our upper bound follows from the following proposition that holds in all dimensions and for all flag manifolds:
\begin{proposition}\label{UpperBoundProp}
For $d \geq 3$, the chromatic number of a flag $d$-manifold on $n$ vertices is at most $C_d n^{1 - 1/(d - 1)}$ for some constant $C_d$ depending only on the dimension. 
\end{proposition}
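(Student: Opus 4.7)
The plan is to iteratively extract large independent sets, coloring each with a fresh color, and bound the number of iterations by $O(n^{1-1/(d-1)})$. This parallels the Chudnovsky--Nevo argument for their lower bound on $\alpha(n,d)$, but the key strengthening is to produce a large independent set in any \emph{induced subgraph} of a flag $d$-manifold, since after removing one independent set the residual graph is no longer a manifold.

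First I would prove the following lemma by induction on $d \geq 2$: there exists a constant $c_d > 0$ such that any induced subgraph $G'$ on $m$ vertices of a flag $d$-sphere contains an independent set of size at least $c_d m^{1/(d-1)}$. For the base case $d = 2$, the four color theorem gives $\alpha(G') \geq m/4$, since $4$-colorability is preserved under taking induced subgraphs. For the inductive step, write $G' \subseteq G$ with $G$ a flag $d$-sphere, and split into two cases. If some $v \in V(G')$ satisfies $\deg_{G'}(v) \geq m^{(d-2)/(d-1)}$, then $G'[N_{G'}(v)]$ is an induced subgraph of $\lk_G(v)$, which is a flag $(d-1)$-sphere; the inductive hypothesis gives an independent set of size at least $c_{d-1}(\deg_{G'}(v))^{1/(d-2)} \geq c_{d-1} m^{1/(d-1)}$ inside that neighborhood, and it is then also independent in $G'$. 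Otherwise every vertex of $G'$ has degree below $m^{(d-2)/(d-1)}$, and a standard greedy/Tur\'an argument yields $\alpha(G') \geq m/(1 + m^{(d-2)/(d-1)}) = \Omega(m^{1/(d-1)})$.

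Given this lemma, the proposition follows by direct greedy iteration. Starting from a flag $d$-manifold $G$ on $n$ vertices (whose vertex links are flag $(d-1)$-spheres, so the same case analysis applies even though $G$ itself need not be a sphere), at each step apply the lemma to the currently remaining induced subgraph, color a guaranteed large independent set with a new color, and delete it. Writing $m_k$ for the vertex count after $k$ iterations, the recurrence $m_{k+1} \leq m_k - c_d m_k^{1/(d-1)}$ implies by comparison with the ODE $dm/dt = -c_d m^{1/(d-1)}$ that $m_k^{(d-2)/(d-1)}$ drops by a constant each step. Hence the process terminates after $O(n^{(d-2)/(d-1)}) = O(n^{1 - 1/(d-1)})$ iterations, which is the desired bound on the chromatic number.

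The main obstacle is the first step, and specifically the passage from the Chudnovsky--Nevo manifold-level argument to a statement about arbitrary induced subgraphs: this is what makes iterative coloring work at all. The point to verify carefully is that the inductive invariant ``induced subgraph of a flag $k$-sphere'' is preserved under passing to neighborhoods inside the subgraph, since $G'[N_{G'}(v)] \subseteq G[N_G(v)] = \lk_G(v)$, which is a flag $(k-1)$-sphere. Matching the explicit constant $C_3 = 4$ advertised in Theorem \ref{maintheorem} would require tracking constants in the $d = 3$ analysis (the $1/4$ from the four color theorem and the $1/2$-power iteration), but any finite $C_d$ suffices for the proposition as stated.
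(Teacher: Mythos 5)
Your proof is correct, but it takes a genuinely different route from the paper's. The paper's proof runs a vertex-removal process on the manifold itself: repeatedly pick a high-degree vertex (degree $> x n^{1-1/(d-1)}$), color its link with $C_{d-1}\cdot(\text{link size})^{1-1/(d-2)}$ colors (just $4$ colors when $d=3$ via the four color theorem), delete the link, and when all remaining degrees are small apply Brooks' Theorem to the residual graph. Your approach instead proves a clean standalone lemma --- every $m$-vertex induced subgraph of a flag $d$-sphere contains an independent set of size $\Omega(m^{1/(d-1)})$ --- and then greedily peels off one independent set per color, closing the recursion with the ODE/concavity estimate. The two proofs exploit the same two ingredients (links are flag lower-dimensional spheres, low-degree graphs have large independent sets), but they package them differently. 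The paper's version reasons about the full process somewhat informally (``vertex links always remain planar''), and by coloring entire links in one shot it wrings out a better constant: $C_3 = 4$, versus roughly $8$ from your $c_3 = 1/4$ peeling. Your version isolates the induced-subgraph invariance as the explicit inductive statement, which makes the correctness of the iteration cleaner to verify and also makes the lemma reusable (it directly recovers the Chudnovsky--Nevo lower bound on $\alpha(n,d)$ as a corollary). Both give the same $O(n^{1-1/(d-1)})$ order; only the constant differs, and you correctly flag that yours would not match the $4\sqrt{n}$ of Theorem~\ref{maintheorem} without more careful bookkeeping.
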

\begin{proof}
We proceed by induction. For $d = 3$ we have that if the maximum degree of a flag $d$-manifold is at most $x\sqrt{n}$ (for $x$ constant to be selected later) then by Brooks' Theorem the chromatic number is at most $x \sqrt{n}$. Otherwise, we select a vertex $v$ of degree larger than $x \sqrt{n}$. Now the link of $v$ is necessarily a flag 2-sphere, so it is four colorable. So we can color the link of $v$ with four (new) colors and then delete those vertices. We continue to repeat this process (observing that vertex links always remain planar) until we end up with all vertices of degree at most $x \sqrt{n}$ at which point we apply Brooks' Theorem. After at most $\frac{n}{x \sqrt{n}}$ repetitions of this process we have used at most $\frac{4 \sqrt{n}}{x}$ colors and have at most $x \sqrt{n}$ vertices remaining that we then color with $x \sqrt{n}$ colors. Thus for any $x$ we can color our flag 3-manifold with at most 
\[\left(\frac{4}{x} + x\right)\sqrt{n}\]
colors. This holds for any $x$ so we take $x = 2$ to minimize the number of colors at $4\sqrt{n}$. 

Now inductively if we have a flag $d$-manifold and the maximum degree is at most $xn^{1 - 1/(d - 1)}$ then we apply Brooks' Theorem. Otherwise we can apply the $(d - 1)$-case within a vertex link as before and show that we can color with at most
\[\left(x + \frac{C_{d-1}}{x^{1/(d - 2)}}\right)n^{1 - 1/(d - 1)}\]
colors.

We had that $C_3 = 4$ and by basic calculus for $d \geq 4$ we can take
\[C_d = \left(\frac{C_{d - 1}}{d - 2} \right)^{(d - 2)/(d - 1)} + C_{d-1}\left(\frac{d - 2}{C_{d - 1}}  \right)^{1/(d - 1)}.\]
\end{proof}
We observe that the proof of Proposition \ref{UpperBoundProp} only relies on the four color theorem and the fact that links of faces of dimension $d - 3$ are planar graphs, and the four color theorem is only used to slightly improve $C_d$. We now show by a short random argument that these assumptions alone are not sufficient to substantially improve the upper bound, so if we wish to improve on the the upper bound for chromatic number of flag $d$-manifolds, or even flag $d$-spheres, we'd need to use more of their topological or geometric properties. 
\begin{proposition}\label{UpperBoundProposition}
For any $\delta < 1 - 1/(d - 1)$ and $n$ large enough there is a $d$-dimensional flag complex so that the link of every $(d - 3)$-dimensional face embeds in the 2-sphere and the chromatic number is at least $n^{\delta}$.
\end{proposition}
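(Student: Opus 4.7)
The plan is to let $X$ be the clique complex of an Erdős–Rényi random graph $G \sim G(n,p)$ with $p = n^{-1/(d-1) - \eta}$ for $\eta > 0$ chosen small enough that $1 - 1/(d-1) - \eta > \delta$. By standard results on the chromatic number of sparse random graphs (e.g.\ Bollobás) we have $\chi(G) = \widetilde{\Theta}(n^{1-1/(d-1)-\eta})$ asymptotically almost surely, which exceeds $n^\delta$; and the expected number of $(d+1)$-cliques in this regime is a positive power of $n$, so a second-moment computation confirms that $X$ is $d$-dimensional a.a.s.

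The heart of the argument is to verify that for every $(d-3)$-face $\sigma$ of $X$ the link $X_\sigma$ embeds in $S^2$. For a $(d-2)$-clique $\sigma$, the link is the 2-complex whose vertex set is the common neighborhood $N(\sigma)$, whose edges are the induced edges in $G$, and whose 2-faces are the induced triangles. A Chernoff bound shows $|N(\sigma)| \lesssim n p^{d-2} = n^{1/(d-1) - \eta(d-2)}$ with high probability, and conditional on $|N(\sigma)|$ the induced subgraph $G[N(\sigma)]$ is itself Erdős–Rényi with expected average degree $|N(\sigma)| \cdot p \approx n^{-\eta(d-1)} < 1$, placing it firmly in the subcritical regime. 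Almost every component of $G[N(\sigma)]$ is then a tree or a unicyclic graph, so the corresponding 2-complex component is either a tree or a triangulated disk (a tree with a single filled 3-cycle), each of which embeds in $S^2$; different components can be placed in disjoint regions of the sphere, so the whole link embeds.

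The step I expect to require the most care is taking the union bound over all $(d-2)$-cliques $\sigma$ simultaneously. The probability that a specific link contains a non-planar configuration (such as a $K_{3,3}$ subgraph, or three triangles sharing a common edge) is a suitable negative power of $n$, and for $d = 3$ this directly closes the union bound over the $n$ vertices of $X$. For $d \geq 4$ the number of $(d-2)$-cliques grows as a higher power of $n$, so a more delicate argument is needed—either a Lovász Local Lemma application that leverages the local dependence of each bad event on only a bounded neighborhood of edges, or an alteration step in which one deletes an edge from each non-planar configuration and verifies that the fraction of edges deleted is negligible, so that $\chi$ is preserved up to lower-order terms. Either route yields a flag $d$-complex on $n$ vertices with planar $(d-3)$-face links and chromatic number at least $n^{\delta}$, as required.
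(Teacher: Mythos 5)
Your proposal takes essentially the same route as the paper: both use the random clique complex $X(n,n^{-\alpha})$ with $\alpha$ slightly above $1/(d-1)$, observe that the link of a $(d-3)$-face is the clique complex of an Erd\H{o}s--R\'enyi graph whose expected degree tends to $0$ (so it is a.a.s.\ a forest, hence planar), and conclude a chromatic number lower bound of order $n^{1-\alpha}$ up to logs. Two small points of divergence: the paper does not invoke the asymptotics of $\chi(G(n,p))$ directly but instead uses a first-moment bound showing no independent set of size $\gg n^{\alpha}\log n$, which after the deletion step gives $\chi \gtrsim n^{1-\alpha}/\log n$ without appealing to Bollob\'as-type concentration; and for the union-bound difficulty you raise when $d\ge 4$, the paper commits to the alteration route you list as one option---it simply deletes the $o(n)$ vertices whose incident $(d-3)$-face links fail to be forests, which simultaneously fixes the planarity and prunes the dimension down to $d$. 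Your sketch would be complete if you committed to that alteration step (and verified that the deleted vertex set is $o(n)$ rather than merely the bad-clique count being polynomial) instead of leaving the Local Lemma alternative open; in other respects the argument matches the paper's.
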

\begin{proof}
Take $p = n^{-\alpha}$ for $1/(d - 2) > \alpha > 1/(d - 1)$. Then the random clique complex $X \sim X(n, n^{-\alpha})$ (the clique complex of the Erd\H{o}s--R\'{e}nyi random graph, introduced in \cite{KahleRandomClique}) has that the link of every $(d - 3)$-dimensional face is essentially a random clique complex sampled from $X(m, m^{-\alpha/(1 - (d - 2)\alpha)})$ with $m = n^{1-(d - 2)\alpha} \rightarrow \infty$. Since the connectivity threshold for the underlying graph is at $\frac{\log m}{m}$ by the classic Erd\H{o}s--R\'enyi result and 
\[\frac{\alpha}{1 - (d - 2)\alpha} > 1,\]
almost all vertex links are forests. We delete the $o(n)$ of the vertices that have links that are not forests (this also brings the dimension of the complex down to $d$). By a first moment argument $X$ has no independent set of size larger than 
$C n^{\alpha}\log n$, for some constant $C$, from which the claim follows.
\end{proof}
\section{The lower bound}
The construction for the lower bound is based on the idea of starting with a triangulation of a sphere and performing edge subdivisions to make it into a flag triangulation, while of course taking care to ensure that we preserve a lower bound on the chromatic number. An edge subdivision is formally defined as follows.
\begin{definition}
For an edge $e = \{u, v\}$ in a simplicial complex $X$ we define the \emph{subdivision of $X$ at $e$}, denoted $X(e)$ to be the simplicial complex obtained from $X$ by adding a new vertex $w$ and replacing each facet of the form $\{u, v\} \sqcup \sigma$ with the facets $\{u, w\} \sqcup \sigma$ and $\{v, w\} \sqcup \sigma$.
\end{definition}
A flag complex is a simplicial complex where all minimal nonfaces have size 2. So if for example $X$ has an empty triangle we can subdivide an edge of that triangle, however that may create new empty triangles with the subdivision vertex as in Figure \ref{SubdivisionFigure}. It isn't clear then that this edge subdivision process can be done in such a way as to guarantee it will terminate in a flag complex in a finite number of steps. Moreover, we are also interested in bounding the number of required subdivisions by a polynomial.

\begin{figure}
\centering
\begin{tikzpicture}[line cap=round,line join=round,>=triangle 45,x=3cm,y=3cm]
\clip(0.19760407760459328,-1.2943672235324037) rectangle (4.204698410934663,1.1787006800592954);
\fill[line width=1pt,fill=black,fill opacity=0.42] (2,-0.42264973081037444) -- (1,-1) -- (2,0.7320508075688775) -- cycle;
\fill[line width=1pt,fill=black,fill opacity=0.43] (2,0.7320508075688775) -- (2,-0.42264973081037444) -- (3,-1) -- cycle;
\draw [line width=1pt] (2,0.7320508075688775)-- (2,-0.42264973081037444);
\draw [line width=1pt] (2,-0.42264973081037444)-- (1,-1);
\draw [line width=1pt] (1,-1)-- (2,0.7320508075688775);
\draw [line width=1pt] (2,0.7320508075688775)-- (3,-1);
\draw [line width=1pt] (2,-0.42264973081037444)-- (3,-1);
\draw [line width=1pt] (1,-1)-- (3,-1);
\draw [line width=1pt] (2,-0.42264973081037444)-- (2.792689239615456,0.5910184850391601);
\draw [line width=1pt] (2,0.7320508075688775)-- (2.792689239615456,0.5910184850391601);
\draw (1.9515162519091687,0.8760344198916922) node[anchor=north west] {$u$};
\draw (1.9515162519091687,-0.4303455919428352) node[anchor=north west] {$v$};
\draw (0.8650219846408477,-0.9942878544773441) node[anchor=north west] {$x$};
\draw (3.0364663214154695,-0.9942878544773441) node[anchor=north west] {$y$};
\draw (2.8491674679618053,0.6690831308882026) node[anchor=north west] {$z$};
\draw [line width=1pt] (2,0.15470053837925155)-- (1,-1);
\draw [line width=1pt] (2,0.15470053837925155)-- (3,-1);
\draw (1.831670191870491,0.2603543351063109) node[anchor=north west] {$w$};
\begin{scriptsize}
\draw [fill=black] (1,-1) circle (2.5pt);
\draw [fill=black] (3,-1) circle (2.5pt);
\draw [fill=black] (2,0.7320508075688775) circle (2.5pt);
\draw [fill=black] (2,-0.42264973081037444) circle (2.5pt);
\draw [fill=black] (2.792689239615456,0.5910184850391601) circle (2.5pt);
\draw [fill=black] (2,0.15470053837925155) circle (2.5pt);
\end{scriptsize}
\end{tikzpicture}
\caption{Subdividing $uv$ with $w$ eliminates $uvz$ as an empty triangle, but adds $wxy$ as an empty triangle.}\label{SubdivisionFigure}
\end{figure}
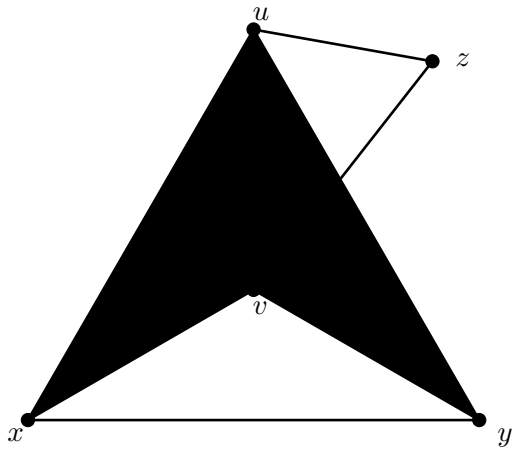

We begin with a lemma that makes a some observations about how the structure of empty simplices are changed by an edge subdivision. 
\begin{lemma}\label{NewEmpty}
Suppose that $X$ is a simplicial complex in which every minimal nonface has dimension at most $d$. Then a subdivision at an edge $e = \{u, v\}$ with subdivision vertex $w$ cannot create a minimal nonface of dimension larger than $d$ and any new minimal nonface in $\mnf(X(e)) \setminus \mnf(X)$ is of the form $\{w\} \sqcup \tau$ with either $\tau \sqcup \{v\}$ or $\tau \sqcup \{u\}$ as a minimal nonface in $\mnf(X)$.
\end{lemma}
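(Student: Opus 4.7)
The plan is to first characterize the faces of $X(e)$ in terms of those of $X$, and then classify the new minimal nonfaces according to whether they contain the subdivision vertex $w$. Directly from the definition of the subdivision, a set $\alpha$ with $w \notin \alpha$ is a face of $X(e)$ if and only if $\alpha \in X$ and $\{u,v\} \not\subseteq \alpha$, while a set of the form $\{w\} \sqcup \tau$ is a face of $X(e)$ if and only if $\{u,v\} \not\subseteq \tau$ and $\tau \cup \{u,v\} \in X$. This translation lets every question about the nonface structure of $X(e)$ be rephrased as a question about $X$.

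For new minimal nonfaces not containing $w$, the face criterion shows that the only source is faces of $X$ containing $\{u,v\}$, and minimality collapses this to the single set $\{u,v\}$ itself, a one-dimensional new nonface that does not obstruct the dimension bound. For new minimal nonfaces of the form $\{w\} \sqcup \tau$, I would first observe that $u,v \notin \tau$ by minimality (otherwise $\{w\} \sqcup (\tau \setminus \{u\})$ or $\{w\} \sqcup (\tau \setminus \{v\})$ would inherit the same nonface certificate and contradict minimality), and then translate the minimality of $\{w\} \sqcup \tau$ via the face criterion into the conditions $\tau \in X$, $\tau \cup \{u,v\} \notin X$, and $\tau' \cup \{u,v\} \in X$ for every $\tau' \subsetneq \tau$.

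To identify the associated minimal nonface of $X$, I would split on whether $\tau \cup \{u\}$ and $\tau \cup \{v\}$ lie in $X$: if either is absent, then the subset conditions automatically upgrade it to a minimal nonface of $X$, giving the $\tau \sqcup \{u\}$ or $\tau \sqcup \{v\}$ form recorded in the lemma; otherwise both are faces of $X$ and $\tau \cup \{u,v\}$ is itself a minimal nonface of $X$ of dimension $|\tau|+1$. In every subcase, $\dim(\{w\} \sqcup \tau) = |\tau|$ is bounded above by the dimension of the associated minimal nonface of $X$, so the claimed bound $|\tau| \le d$ follows from the hypothesis on $X$.

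The main obstacle I expect is the final case split. Pinning down which of $\tau \sqcup \{u\}$, $\tau \sqcup \{v\}$, or $\tau \sqcup \{u,v\}$ plays the role of the related minimal nonface of $X$ requires exhausting all three possibilities, and it is the third case in which the correspondence is least symmetric: the dimension of $\{w\} \sqcup \tau$ drops by one relative to the corresponding nonface of $X$, so the dimension bound still holds but the structural description one obtains is slightly stronger than elsewhere. Beyond this bookkeeping, the proof is a mechanical unwinding of the subdivision definition.
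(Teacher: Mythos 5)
Your route is genuinely different from the paper's. The paper argues directly by contradiction: it assumes from the outset that $\sigma$ is a new minimal nonface of dimension at least $d$, splits on whether $w\in\sigma$, and in the $w\in\sigma$ case observes that $\{u,v\}\sqcup\tau$ is a nonface of $X$ whose dimension (at least $d+1$) is too large to be minimal, from which it concludes that the minimal nonface it contains must be $\{u\}\sqcup\tau$ or $\{v\}\sqcup\tau$. You instead first establish a clean characterization of the faces of $X(e)$ in terms of $X$, translate minimality through that characterization, and then do an exhaustive case split on whether $\tau\cup\{u\}$ and $\tau\cup\{v\}$ are faces. Your argument is more systematic and buys you more: it applies to \emph{all} new minimal nonfaces, not just those of dimension at least $d$. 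Both arguments deliver the dimension bound, which is the part actually used in the application.

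In the process you have uncovered that the lemma's second assertion, taken literally, is slightly too strong, and it is worth saying this plainly rather than burying it as ``slightly stronger than elsewhere.'' First, $\{u,v\}$ is always a new minimal nonface of $X(e)$ and is not of the form $\{w\}\sqcup\tau$; you noted this. Second, your Case~3 is a genuine possibility for small $\tau$: if $\tau$, $\tau\cup\{u\}$, $\tau\cup\{v\}$, and $\tau'\cup\{u,v\}$ for all $\tau'\subsetneq\tau$ are faces of $X$ but $\tau\cup\{u,v\}$ is not, then $\{w\}\sqcup\tau$ is a new minimal nonface of $X(e)$, yet neither $\tau\sqcup\{u\}$ nor $\tau\sqcup\{v\}$ is a nonface of $X$; the associated minimal nonface of $X$ is $\tau\sqcup\{u,v\}$ instead. (A concrete instance: $X$ an empty triangle on $\{a,u,v\}$, $\tau=\{a\}$.) The paper's proof does not contradict you because it only treats $\dim\sigma\ge d$, where $\dim(\{u,v\}\sqcup\tau)\ge d+1$ rules Case~3 out; but that restriction is not reflected in the lemma as written. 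The correct general statement is that one of $\tau\sqcup\{u\}$, $\tau\sqcup\{v\}$, or $\tau\sqcup\{u,v\}$ lies in $\mnf(X)$, which still yields the dimension bound since in every case the relevant nonface of $X$ has dimension at least $\dim(\{w\}\sqcup\tau)$. For the application in Lemma~\ref{SubdivisionLemma} only the top-dimensional new nonfaces matter, so the gap is harmless there, but your version is the one that should be stated if the lemma is to be quoted verbatim.
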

\begin{proof}
Suppose that $\sigma$ is a minimal nonface created by subdividing at $e$ of dimension at least $d$. If $w \notin \sigma$ then $\partial \sigma$ belongs to $X(e)$, but does not contain $w$ so it belongs to $X$ as well, thus $\sigma$ was also a minimal nonface in $X$, so in particular it has dimension at most $d$. If $w \in \sigma$ then take $\tau = \sigma \setminus w$ with $|\tau| \geq d$. If $u$ or $v$ belongs to $\tau$, without loss of generality suppose that $u$ belongs to $\tau$, then $\{w\} \sqcup \tau \setminus \{u\}$ is a face of $X(e)$ by minimality. By how the edge subdivisions are done then $\{v\} \sqcup \tau$ is a face of $X$ (observing that no face of $X(e)$ contains both $u$ and $v$), but then $\{w\} \sqcup \tau$ is a face of $X(e)$ as it obtained by deleting $u$ and $v$ from $\{v\} \sqcup \tau$ and then adding $\{u, w\}$, so this is a contradiction. Thus we assume $u$ and $v$ do not belong to $\tau$. By minimality $\tau$ belongs to $X(e)$ and does not contain $w$ so $\tau$ belongs to $X$. Moreover the cone over $\partial \tau$ with cone point $w$ was added in the subdivision, so $\partial \tau$ is in the link in $X$ of $\{u, v\}$. But since $w \sqcup \tau$ is a nonface of $X(e)$, $\{u, v\} \sqcup \tau$ is a nonface of $X$, however its dimension is at least $d + 1$ so it cannot be a minimal nonface. By how we create the subdivision every proper face of $\{u\} \sqcup \tau$ and $\{v\} \sqcup \tau$ belongs to $X$, so one of $\{u \} \sqcup \tau$ or $\{v\} \sqcup \tau$ is a minimal nonface of $X$.
\end{proof}

We use edge subdivisions to give a construction of a flag 3-sphere on $n$ vertices with chromatic number at least $\widetilde{\Omega}(n^{1/4})$. The idea of the construction will be to start with a graph $G$ with large chromatic number and then embed that graph inside the 1-skeleton of the boundary of the cyclic 4-polytope. Recall that the cyclic 4-polytope on $n$ vertices is the convex hull of $n$ points on the moment curve $(t, t^2, t^3, t^4)$ and by Gale's evenness its facets are all collections of $\{x, x + 1, y, y + 1\}$  for $x$ and $y$ on the $n$-cycle, in particular it is 2-neighborly so the 1-skeleton is the complete graph. At this point we have $G$ as a subgraph of a non-flag triangulation of a 3-sphere. Next we show that we can do edge subdivisions within this 3-sphere without changing the subgraph $G$ in a way that results in a flag triangulation of $S^3$ that still contains $G$ as a subgraph. For the lower bound of Theorem \ref{maintheorem} we use the following subdivision lemma and a construction from Ramsey theory.
\begin{lemma}\label{SubdivisionLemma}
If $G$ is a triangle-free graph on $n$ vertices then there exists a flag triangulation $X$ of $S^3$ on at most $4\binom{n}{2} + n$ vertices containing $G$ as a subgraph.
\end{lemma}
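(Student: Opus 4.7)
I would carry out the strategy sketched immediately before the lemma. Take $X_0 = \partial C_4(n)$, the boundary of the cyclic 4-polytope on $n$ vertices. Because $C_4(n)$ is 2-neighborly, the 1-skeleton of $X_0$ is $K_n$, and so any triangle-free graph $G$ on these $n$ vertices is automatically a subgraph of $X_0^{(1)}$. A direct check (for example from Gale's evenness characterization of the facets) shows that every minimal nonface of $X_0$ is an empty triangle (3-element subset of the vertex set), i.e.\ has dimension 2. By Lemma~\ref{NewEmpty}, every edge subdivision preserves the property that all minimal nonfaces have dimension at most 2, so throughout the construction we only need to worry about eliminating empty triangles.

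Because $G$ is triangle-free, every empty triangle that ever appears contains at least one edge that is not in $G$, so there is always a legal edge to subdivide. The plan is: while the current complex has an empty triangle $T$, select a non-$G$ edge $\{u,v\}$ of $T$ and subdivide it, introducing a new vertex $w$. The edge $\{u,v\}$ disappears, killing $T$. By Lemma~\ref{NewEmpty}, the only newly-created empty triangles are of the form $\{w,x,y\}$ where $\{u,x,y\}$ or $\{v,x,y\}$ was already an empty triangle, so the process ``pushes'' each problem onto the freshly added vertex rather than spreading arbitrarily. Edge subdivision is a PL move, so the resulting complex still triangulates $S^3$, and $G$ is preserved intact because its edges are never subdivided.

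The heart of the lemma is the vertex count: we need at most $4$ new vertices per original edge of $K_n$ on average. The way to control this is to fix in advance, for every empty triangle of $X_0$, a designated non-$G$ edge to be subdivided, and to argue that along each original edge $\{u,v\}$ the cascade of subsequent subdivisions terminates after a bounded number of steps. The starting complex $\partial C_4(n)$ is highly structured under Gale's evenness: the link of each edge is a specific cycle, and the empty triangles come in a constrained family, so new empty triangles containing the subdivision vertex $w$ correspond to a controlled set of $4$-element configurations in $X_0$ near $\{u,v\}$. Combining this with a suitable potential function (for instance, a weighted count of remaining empty triangles that decreases with each subdivision) should yield the claimed $4\binom{n}{2}+n$ bound.

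The main obstacle is this termination-with-constant-$4$ step: without care, a single subdivision could spawn several new empty triangles, each in turn spawning more, and one must use the specific combinatorial structure of $\partial C_4(n)$ (together with the fact that new nonfaces always contain the new vertex) to rule out an uncontrolled cascade. Once that bookkeeping is done, the remaining points — that the final complex is flag (every minimal nonface has been eliminated and, by Lemma~\ref{NewEmpty}, no nonfaces of dimension $\geq 3$ were ever introduced), that it still triangulates $S^3$, and that $G$ sits inside its 1-skeleton — are immediate consequences of Lemma~\ref{NewEmpty} and the PL-invariance of edge subdivision.
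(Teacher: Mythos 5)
Your setup is correct and matches the paper's: embed $G$ in the boundary of the cyclic $4$-polytope (using $2$-neighborliness), observe via Gale's evenness that the only minimal nonfaces are empty triangles, invoke Lemma~\ref{NewEmpty} to confirm no higher-dimensional minimal nonfaces ever appear, and subdivide only non-$G$ edges (always possible since $G$ is triangle-free). You also correctly pinpoint that the entire content of the lemma is the $4\binom{n}{2}$ bound on the number of subdivisions.

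However, at exactly that point the proposal stops being a proof. Saying that ``a suitable potential function \ldots should yield the claimed bound'' and that the structure of $\partial C_4(n)$ ``should'' rule out an uncontrolled cascade identifies the problem without solving it. The paper's argument supplies two concrete ingredients that you do not: (1) a maintained \emph{invariant} --- after each round, every remaining empty triangle lies entirely on the $n$ original vertices --- rather than a per-edge ``designated edge to subdivide in advance,'' which is ill-posed because empty triangles appear and disappear as you subdivide; and (2) a quantitative reason the cascade is short: since an empty triangle on original vertices cannot contain a consecutive pair $\{i,i+1\}$ or $\{1,n\}$ (these always span edges of facets by Gale's evenness), the link of any edge of such a triangle is a cycle with at most $4$ original vertices, so subdividing that edge creates at most $2$ new empty triangles through the new vertex $w$, and these can then be cleared with at most $3$ further subdivisions (via a careful case analysis, with the $4$-cycle link case handled explicitly) so that the invariant is restored. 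Each round therefore costs at most $4$ new vertices and permanently removes one original non-$G$ edge, giving $4\binom{n}{2}$ after all such edges are gone. Without that invariant and the ``at most $4$ original vertices in the link'' observation, the bound you assert is unjustified, so the argument has a genuine gap at its central step.
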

The only minimal empty simplices in a cyclic 4-polytope are empty triangles, see for example the discussion after Corollary 4.16 in \cite{Nagel}. By Lemma \ref{NewEmpty} it suffices to show that there is a sequence of edge subdivisions on non-edges of $G$ that eliminates all empty triangles. As we saw in Figure \ref{SubdivisionFigure} we do have to be careful as a single subdivision will not necessarily decrease the number of empty triangles. In our proof of Lemma \ref{SubdivisionLemma} we solve this problem by keeping track of empty triangles on original vertices, i.e. empty triangles that are on three vertices of the original cyclic 4-polytope. Our proof essentially shows that if at any point all the empty triangles are on original vertices then there is a sequence of at most four subdivisions that include a subdivision of an original edge and end with all empty triangles still on the original vertices. 
\begin{proof}[Proof of Lemma \ref{SubdivisionLemma}]
We regard $G$ as being on $n$ labelled vertex and embed $G$ in the cyclic 4-polytope on $[n]$. We now want to subdivide the edges of the cyclic polytope without ever subdividing an edge of $G$ so that we arrive a flag triangulation of $S^3$. We claim that this can be done in at most $4 \binom{n}{2}$ steps. As long as there is an empty triangle we will pick an edge of it to subdivide. Since $G$ is triangle free, we will never be required to subdivide an edge of $G$. However when we subdivide an edge we may create new empty triangles. Our procedure will get rid of those new empty triangles as soon as possible so that in a bounded number of steps we return to a situation where all empty triangles are on original vertices.

Suppose we have a partial subdivision of our cyclic polytope with $G$ embedded in it, and furthermore suppose that all empty triangles are on the original vertices of the cyclic polytope. Suppose that $\partial \tau$ is an empty triangle on the original vertices.  Then the vertices of $\tau$ do not contain a pair of vertices $\{i, i + 1\}$ in $[n]$ nor does $\tau$ contain $\{1, n\}$ since there are no empty triangles in the cyclic polytope that contain such a pair by Gale's evenness. This means that the link of each edge of $\tau$ is a cycle with at most 4 of the original vertices. Select one edge $e$ of $\tau$ that is not in $G$ to subdivide. Since all empty triangles that we have before subdividing $\tau$ involve original vertices, the only way that our subdivision vertex $w$ can belong to a new empty triangle is if it belongs to an empty triangle with two original vertices that are adjacent in the subdivided complex but not in the link of $e$. It is clear that there are at most two such pairs. Therefore when we subdivide $e$ with subdivision vertex $w$ we have at most two empty triangles that contain $w$ and those are the only empty triangles that contain a subdivided vertex. If the link of $e$ is just a 4-cycle on original vertices then we can take three more subdivision and no longer have any empty triangles containing a subdivided vertex, see Figure \ref{4Cycle}.
\begin{figure}[H]
\centering
\definecolor{ccqqqq}{rgb}{0.8,0,0}
\begin{tikzpicture}[line cap=round,line join=round,>=triangle 45,x=1cm,y=1cm]
\clip(-2.1290112999066135,-2.183676445443878) rectangle (2.167395061411194,1.0723445189628877);
\draw [line width=1pt] (-1.5,-0.5)-- (0,1);
\draw [line width=1pt] (0,1)-- (1.5,-0.5);
\draw [line width=1pt] (-1.5,-0.5)-- (0,-2);
\draw [line width=1pt] (0,-2)-- (1.5,-0.5);
\draw [line width=1pt] (0,-0.5)-- (-1.5,-0.5);
\draw [line width=1pt] (0,-0.5)-- (1.5,-0.5);
\draw [line width=1pt] (-0.5,0)-- (-1.5,-0.5);
\draw [line width=1pt] (-0.5,0)-- (0,-0.5);
\draw [line width=1pt] (-0.5,0)-- (0,1);
\draw [line width=1pt] (0,-0.5)-- (0,-2);
\draw [line width=1pt] (0,-0.5)-- (0,1);
\draw [line width=1pt] (-0.75,-0.5)-- (0,-1.25);
\draw [line width=1pt] (-0.75,-0.5)-- (-0.5,0);
\draw [line width=1pt] (-0.75,-0.5)-- (0,-2);
\draw [line width=1pt] (0,-1.25)-- (1.5,-0.5);
\draw (0.04107406476843661,-0.13180524598044277) node[anchor=north west] {$w$};
\begin{scriptsize}
\draw [fill=black] (0,-2) circle (2.5pt);
\draw [fill=black] (1.5,-0.5) circle (2.5pt);
\draw [fill=black] (0,1) circle (2.5pt);
\draw [fill=black] (-1.5,-0.5) circle (2.5pt);
\draw [fill=ccqqqq] (0,-0.5) circle (2pt);
\draw [fill=ccqqqq] (-0.5,0) circle (2pt);
\draw [fill=ccqqqq] (0,-1.25) circle (2pt);
\draw [fill=ccqqqq] (-0.75,-0.5) circle (2pt);
\end{scriptsize}
\end{tikzpicture}

\caption{Subdivision in the simplest case.} \label{4Cycle}

\end{figure}
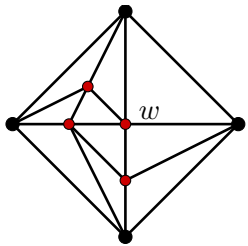

Otherwise the link of $e$ is a cycle with at most 4 original vertices. Now suppose that subdividing $e$ creates the empty triangle on vertices $w, o_1, o_3$. Then the link of the edge $\{w, o_1\}$ must be a four cycle on the two vertices in link of $w$ adjacent to $o_1$ and the two end points of $e$. Let $x$ and $y$ denote the neighbors of $o_1$ in the link of $e$. As the endpoints of $e$ are no longer adjacent after subdivision the only way that we could create an empty triangle subdividing $\{w, o_1\}$ with subdivision vertex $w'$ is if we create the empty triangle $\{w', x, y\}$ but then by how we do the subdivision we'd have either $\{o_1, x, y\}$ or $\{w, x, y\}$ as an empty triangle before we subdivide $\{w, o_1\}$. This means then that $x$ and $y$ must be original vertices. If this is the case then we instead try to subdivide $\{w, o_3\}$ unless the $x$ and $y$ in that case are also original vertices. However if this happens we are in the case pictured in Figure \ref{4Cycle} and we know what to do. If necessary after subdividing to get rid of the empty triangle on $\{w, o_1, o_3\}$ we do the same thing with one more subdivision to get rid of the other empty triangle $\{w, o_2, o_4\}$. At this point all the empty triangles contain only original vertices. It is clear that after at most $\binom{n}{2}$ repetitions of this procedure we no longer have any empty triangles because subdividing all edges of the complete graph outside of $G$ in this way will get rid of all empty triangles on original vertices without creating new empty triangles. Within each subdivision procedure we perform at most four edge subdivisions giving us that at most $4\binom{n}{2}$ subdivisions are sufficient until we have subdivided at least one edge of every empty triangle we started with while also having all empty triangle on the original vertices. Therefore we have no empty triangles.
\end{proof}

Next we use the following result of Kim to find a triangle-free graph $G$ on which we can apply Lemma \ref{SubdivisionLemma}. 

\begin{theorem}[Kim \cite{JHKim}] \label{KimsConstruction}
For $n$ large enough there exists a triangle-free graph on $n$ vertices with chromatic number $\Omega \left(\frac{\sqrt{n}}{\sqrt{\log n}} \right)$
\end{theorem}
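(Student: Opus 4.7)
The plan is to reduce the chromatic number bound to an independence number bound: if one exhibits a triangle-free graph $G$ on $n$ vertices with $\alpha(G) = O(\sqrt{n \log n})$, then, because a proper coloring partitions $V(G)$ into independent sets, $\chi(G) \geq n/\alpha(G) = \Omega(\sqrt{n/\log n})$. So the whole task is to build a triangle-free graph whose independent sets are all small. A first attempt would be the pure random model $G(n,p)$ with $p \asymp n^{-1/2}$ and deletion of one vertex per triangle, but a direct computation shows this only gives $\alpha(G) = O(\sqrt{n}\,\log n)$, producing chromatic number $\Omega(\sqrt{n}/\log n)$, which is off from Kim's bound by a $\sqrt{\log n}$ factor. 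To close this gap I would turn to a semi-random construction.

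Specifically, I would analyze the \emph{triangle-free process}: start from the empty graph on $[n]$ and, at each step, choose uniformly at random an edge whose addition does not create a triangle, and add it; continue until the set of addable edges is empty. By construction the output $G$ is triangle-free, and the maximality makes $G$ locally dense enough to hope that no large independent set survives. The main quantitative claim to prove is that with high probability the process runs for $\Theta(n^{3/2}\sqrt{\log n})$ steps and the terminal graph $G$ has $\alpha(G) = O(\sqrt{n \log n})$.

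To carry out the analysis I would apply the differential equations method (Wormald's method) to track, throughout the process, a family of random variables together with their predicted deterministic trajectories: the number $Q(i)$ of open (i.e., currently addable) edges, the available degree $D_v(i)$ at each vertex $v$, and the codegree $C_{uv}(i)$ at each non-edge $uv$. A careful martingale argument shows these variables remain sharply concentrated around explicit functions of the rescaled time $t = i / n^{3/2}$ until $t$ reaches order $\sqrt{\log n}$, at which point the process terminates. To bound $\alpha(G)$ I would then argue that for any fixed set $S$ of size $C\sqrt{n\log n}$, the expected number of open edges inside $S$ stays large enough throughout the process that, with overwhelming probability, at least one of them is selected before time $t^\ast$; a union bound over all such $S$ (whose count is sub-exponential in the target bound) finishes the proof.

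The hard part is unquestionably the concentration analysis in the third step. Because the distribution of the next added edge depends in a highly self-referential way on the current open-edge hypergraph, one cannot naively apply Azuma; instead, one has to control the one-step drifts of all tracked variables simultaneously and prove that the error terms remain of smaller order than the leading deterministic signal at every step. This requires auxiliary bookkeeping variables (e.g.\ configurations of paths of length two through a given pair) and repeated use of exponential-type martingale inequalities, and it is the reason Kim's theorem was a substantial advance over the classical Erd\H{o}s random bound. Once this technical core is in hand, however, the chromatic lower bound $\chi(G) \geq n/\alpha(G) = \Omega(\sqrt{n/\log n})$ is immediate.
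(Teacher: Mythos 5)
The paper invokes Kim's theorem as a black box (it is stated and cited, not proved), so there is no in-paper proof to compare against. Your reduction $\chi(G)\geq n/\alpha(G)$ and the goal of constructing a triangle-free graph with $\alpha(G)=O(\sqrt{n\log n})$ are exactly right, and the observation that a naive $G(n,p)$ plus triangle-deletion argument loses a $\sqrt{\log n}$ factor is also correct. However, the specific construction you sketch --- running the triangle-free random greedy process to completion and tracking $Q(i)$, $D_v(i)$, $C_{uv}(i)$ with the differential-equations/martingale machinery --- is not Kim's argument: it is Bohman's 2009 analysis (later sharpened by Bohman--Keevash and Fiz Pontiveros--Griffiths--Morris). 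Kim's original 1995 proof is a R\"odl-nibble-type semi-random construction: edges are added in small random increments (``bites'') in rounds, with explicit pseudorandomness invariants (degree and codegree regularity, small local edge counts) maintained from round to round and error accumulation controlled across $O(\log n)$ rounds; no single-edge-at-a-time process is run to termination, and there is no need to locate or prove convergence to a terminal state of an adaptive process. Both routes establish the same $\alpha(G)=O(\sqrt{n\log n})$ and hence the same $\chi(G)=\Omega(\sqrt{n/\log n})$, but they differ genuinely in structure: the nibble approach trades the delicate ``run until stuck'' termination analysis for a fixed, pre-declared number of rounds and cruder per-round concentration, while the triangle-free-process approach yields cleaner pseudorandomness statements at every scale at the cost of the self-referential drift analysis you correctly flag as the hard part. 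If your intent is to sketch a known proof of the cited statement, what you wrote is a valid sketch of Bohman's proof and should be attributed accordingly; if you want to match Kim's own argument, you would replace the third and fourth steps with the nibble construction and its round-by-round invariants.
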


The following more precise statement for the lower bound of Theorem \ref{maintheorem} follows immediately from Theorem \ref{KimsConstruction} and Lemma \ref{SubdivisionLemma}.
\begin{theorem}
For any $\epsilon > 0$, there exists an absolute constant $C$ so that for each $m$ there is a flag triangulation of $S^3$ on at most $C m^4 \log^{2 + \epsilon} m$ vertices that is not $m$ colorable.
\end{theorem}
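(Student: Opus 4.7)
The plan is to combine Kim's Theorem \ref{KimsConstruction} directly with the subdivision machinery in Lemma \ref{SubdivisionLemma}, treating chromatic number monotonicity under subgraph containment as the bridge between them. Given $m$, I would first choose $n$ of order $m^2 \log m$; more precisely, I would take $n = \lceil C_0 m^2 \log m \rceil$ where $C_0$ is a sufficiently large absolute constant chosen so that Kim's $\Omega(\sqrt{n/\log n})$ lower bound, once the implicit constant is made explicit, exceeds $m$. For this value of $n$ (and $m$ large enough that the ``$n$ large enough'' hypothesis in Theorem \ref{KimsConstruction} holds), apply Theorem \ref{KimsConstruction} to obtain a triangle-free graph $G$ on $n$ vertices with $\chi(G) > m$.

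Next, feed $G$ into Lemma \ref{SubdivisionLemma} to produce a flag triangulation $X$ of $S^3$ on at most $4\binom{n}{2} + n \leq 2n^2$ vertices that contains $G$ as a subgraph of its $1$-skeleton. Since chromatic number is monotone under subgraph inclusion, $\chi(X) \geq \chi(G) > m$, so $X$ is not $m$-colorable. The vertex count of $X$ is at most
\[
2n^2 \;\leq\; 2\bigl(C_0 m^2 \log m + 1\bigr)^2 \;\leq\; C\, m^4 \log^{2+\epsilon} m
\]
for an absolute constant $C = C(\epsilon)$ and all $m$; the polylog slack provided by $\epsilon$ comfortably absorbs constants and the $+n$ lower-order term. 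For the finitely many small values of $m$ not covered by the ``$n$ large enough'' threshold in Kim's theorem, we simply enlarge $C$ so the bound is trivially satisfied.

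The argument has essentially no hard step, since all of the real work is packaged inside Lemma \ref{SubdivisionLemma} (whose proof already appears above) and Theorem \ref{KimsConstruction} (which is cited). The only mild subtlety is bookkeeping: one must invert the relation $n/\log n \gtrsim m^2$ to solve for $n$ in closed form, and make sure the implicit constant in Kim's $\Omega$-bound is handled explicitly so that $C_0$ can be named. The slightly wasteful $\log^{2+\epsilon} m$ (rather than the $\log^2 m$ the argument actually produces) is there purely to give a clean statement without having to track these constants.
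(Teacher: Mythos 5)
Your proposal is correct and takes essentially the same route as the paper: choose $n$ on the order of $m^2$ times a polylog factor so that Kim's triangle-free construction has chromatic number at least $m$, feed that graph into Lemma~\ref{SubdivisionLemma}, and invoke subgraph-monotonicity of the chromatic number. The only (cosmetic) difference is that the paper spends the $\epsilon$ slack on absorbing the implicit constant in Kim's $\Omega$-bound by taking $n \approx m^2\log^{1+\epsilon}m$, whereas you fold the constant into $C_0$ and observe that the argument really gives $\log^2 m$, which is a marginally cleaner accounting of the same estimate.
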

\begin{proof}
By Theorem \ref{KimsConstruction}, there is $C_1$ so that for $m$ large enough there is a graph $G$ on at most $C_1 m^2 \log^{1 + \epsilon} m$ vertices with chromatic number at least $m$. Now by Lemma \ref{SubdivisionLemma}, $G$ is a subgraph of a flag triangulation of $S^3$ with at most 
\[2 (C_1 m^2 \log^{1 + \epsilon} m)^2\]
vertices.
\end{proof}

\section{Concluding remarks}
A necessary condition for a counterexample to Chudnovsky and Nevo's conjecture for $d = 3$ would be a flag 3-sphere with high chromatic number. How then does the construction in the proof of Theorem \ref{maintheorem} compare with their conjecture? Within the original vertices the graph $G$ from Theorem \ref{KimsConstruction} has no linear sized independent set, see the original proof in \cite{JHKim}. However, since the cyclic polytope has $\Theta(n^3)$ empty triangles and each edge belongs to at most $O(n)$ empty triangles, we require at least $\Omega(n^2)$ subdivisions to get rid of all the empty triangles. By that point the original vertices are such a small portion of the vertices that we wouldn't expect them to have a meaningful effect on the size of the largest independent set; it seems unlikely then that this construction would provide a counterexample to the conjecture. However, the simplest examples of flag 3-spheres are barycentric subdivisions of arbitrary 3-spheres and joins of flag triangulations of smaller dimensional spheres. Any barycentric subdivision of a $d$-complex clearly has chromatic number $d + 1$, while the suspension of a flag 2-sphere will have chromatic number at most 5 and the join of cycles will have chromatic number at most 6. Since the Chudnovsky--Nevo Conjecture claims the existence of a maximal independent of size at least $n/6$ asymptotically for flag 3-spheres, it is necessary to consider examples beyond these simple ones. 

There are a few other directions of future research as well. One question is related to the study of the high-dimensional chromatic numbers of simplicial complexes as in \cite{LutzMoller, HPPT, LeeNevo} namely:
\begin{question}
Is there a meaningful class of complexes $\mathcal{X}$ so that for some $k$, $\chi_k(X)$ is unbounded for all $X \in \mathcal{X}$, but $\chi_k(X)$ is bounded for flag complexes in $\mathcal{X}$?
\end{question}
Additionally there are the quantitative questions around the chromatic number when it is unbounded. Our main theorem provides bounds for the following question.
\begin{question}
What is the right rate of growth of $\chi(n, d)$ for $d$ fixed and $n$ tending to infinity?
\end{question}
Lastly, it would be interesting to look for a topological interpolation between bounds on chromatic numbers (or independent sets) in flag manifolds and classical Ramsey theory. This idea was suggested by Florian Frick. Let $R_b(k, t)$ denote the maximum number of vertices in a $K_k$ free graph without an independent set of size $t$ where the $(k-1)$st Betti number is at most $b$. Oriented flag manifolds are a special case for when $b = 1$, and $b = \infty$ is ordinary Ramsey theory. How does $R_b(k, t)$ grow for $k$ and $b$ fixed?


\bibliography{ResearchBibliography}
\bibliographystyle{amsplain}
\end{document}